\newtheorem{theorem}{Theorem}[section]
\newtheorem{corollary}[theorem]{Corollary}
\newtheorem{definition}[theorem]{Definition}
\def\F{{\mathcal{F}}}
\newcommand{\sing}{\text{\rm Sing}}
\newcommand{\tang}{\text{\rm Tang}}
\begin{document}
\title[Residues for maps generically transverse to distributions]{Residues
for maps generically transverse to distributions}
\date{\today }
\author{Leonardo M. C\^amara}
\author{Maur\'icio Corr\^ea }
\address{\emph{Leonardo M. C\^amara}: Depto. de Mat. -- CCE, Universidade
Federal do Esp\'irito Santo -- UFES}
\curraddr{Av. Fernando Ferrari 514, 29075-910, Vit\'oria - ES, Brasil.}
\email{leonardo.camara@ufes.br}
\address{\emph{M. Corr\^ea}: Depto. de Mat.--ICEX, Universidade Federal de
Minas Gerais--UFMG}
\curraddr{Av. Ant\^onio Carlos 6627, 31270-901, Belo Horizonte-MG, Brasil.}
\email{mauriciomatufmg@gmail.com}
\subjclass[2010]{Primary 32S65- 32A27.}
\keywords{Residues, non-transversality, Holomorphic foliations and
distributions}

\begin{abstract}
We show a residues formula for maps generically transversal to regular
holomorphic distributions.  
\end{abstract}

\maketitle

\section{Introduction}
Let $f:X\longrightarrow Y$ be a   singular  holomorphic
map between  complex   manifolds $X$ and $Y$, with $\dim(X):=n\geq m=:\dim(Y)$, having   generic fiber $F$. 
Consider the singular set of $f$ defined by
$$
S:=\sing(f)=\{p\in X:  \mbox{rank}(df(p))<m\ \}.
$$
If $Y=C$ is a   curve,     Iversen in \cite{Iversen}  proved the 
following multiplicity formula
\begin{equation*}
\chi(X)-\chi(F)\cdot\chi(C)=(-1)^{n}\sum_{p\in  \sing(f)}\mu _{p}(f),
\end{equation*}
where   $
\mu_{p}(f)$ is the Milnor number of $f$ at $p$.   Izawa and Suwa \cite{Izawa-Suwa2003} generalized Iversen's result
for the case where $X$ is possibly a singular variety.

A   generalization of the multiplicity formula for maps was  given by Diop in \cite{Diop}. In his work he generalized some
formulas involving the Chern classes given previously by  Iversen  \cite{Iversen}, Brasselet \cite{Bra1, Bra2}, and
 Schwartz \cite{Schwartz}. More precisely,  Diop showed that 
if  $S$ is smooth and  $\dim(S)=m-1$ then 
\begin{equation*}
\chi(X)-\chi(F)\chi(Y)=(-1)^{n-m+1}\sum_{j}\mu_{j}\int_{S_{j}}c_{q-1}[%
\left. (f^{\ast}TY)\right\vert _{S_{j}}-\mathcal{L}_{j}],
\end{equation*}
where $S=\cup S_{j}$ is the decomposition of $S$ into irreducible
components, $\mu_{j}=\mu(\left. f\right\vert \Sigma_{j})$  is the
Milnor number of the restriction of $f$ to a transversal section $\Sigma_{j}$
to $S_{j}$ at a regular point \thinspace$p_{j}\in S_{j}$, and $\mathcal{L}%
_{j}$ is the line bundle over $S_{j}$ given by the decomposition $%
f^{\ast}df(\left. TX\right\vert _{S_{j}})\oplus\mathcal{L}_{j}=\left.
f^{\ast}(TY)\right\vert _{S_{j}}$.

On the other hand, Brunella in \cite{Bru} introduced the notion of tangency
index of a germ of curve with respect to a germ of holomorphic foliation:
given a reduced curve $C$ and a foliation $\mathcal{F}$ (possibly singular)
on a complex compact surface. Suppose that $C$ is not invariant by $\mathcal{%
F}$ and that $C$ and $\mathcal{F}$ are given locally by $\{f=0\}$ and a
vector field $v$, respectively. The tangency index $I_{p}(\mathcal{F},C)$ of 
$C$ with respect to $\mathcal{F}$ at $p$ is given by the intersection number 
\begin{equation*}
I_{p}(\mathcal{F},C)=\dim_{\mathbb{C}}\mathcal{O}_{2}/(f,v(f)).
\end{equation*}
Using this index, Brunella proved the following formula 
\begin{equation*}
c_{1}(\mathcal{O}(C))^{2}-c_{1}(T_{\mathcal{F}})\cap c_{1}(\mathcal{O}%
(C))=\sum_{p\in\tang(\mathcal{F},C)}I_{p}(\mathcal{F},C),
\end{equation*}
where $T_{\mathcal{F}}$ is the tangent bundle of $\mathcal{F}$ and $\tang(\mathcal{F},C)$ denotes the non-transversality loci of $C$ with
respect to $\mathcal{F}$. In \cite{Honda} and \cite{Honda2}, T. Honda also
studied Brunella's tangency formula. Distributions and foliations transverse to certain domains in $\mathbb{C}^n$ has been studied by  Bracci and Sc\'ardua  in  \cite{BS}  and Ito and Sc\'ardua  in \cite{IS}.

Recently, Izawa \cite{Izawa} generalized certain results due to Diop \cite
{Diop} in the foliated context. More precisely, let $f:X\longrightarrow (Y,%
\mathcal{F})$ be a holomorphic map such that $\mathcal{F}$ is a regular
holomorphic foliation of codimension one in $Y$. 
Let $S(f,\F)$ be  the set of points where $f$ fails to be transverse to $\F$. 
Suppose  $S(f,\F)$ is given by isolated points and let $\widetilde{\mathcal{F}}:=f^{\ast}%
\mathcal{F}$. Since $\mathcal{F}$ is regular, we may find local coordinates
in a neighborhood of $p\in \sing(f)$ and $f(p)$ in such a way that $%
f=(f_{1},\cdots ,f_{m})$ and $\widetilde{\mathcal{F}}$ is given by $%
\ker(df_{m})$ nearby $p$. If we pick $g_{i}:=\frac{\partial f_{m}}{\partial
x_{i}}$ (i.e., $df_{m}=g_{1}dx_{1}+\cdots+g_{n}dx_{n}$), then%
\begin{equation*}
\mathcal{\chi}(X)-\sum_{i=1}^{r}f_{\ast}(c_{n-i}(T_{X})\cap\lbrack X])\cap
c_{1}(\mathcal{N}_{\mathcal{F}})^{i}=(-1)^{n}\sum_{p\in S(f,\F)}%
\mbox{Res}_{p}\left[ 
\begin{array}{c}
dg_{1}\wedge\cdots\wedge dg_{m} \\ 
g_{1},\cdots,g_{m}%
\end{array}
\right] ,
\end{equation*}
where $\mathcal{N}_{\mathcal{F}}$ denotes the normal sheaf of $\mathcal{F}.$

In this paper we generalize the above results for a regular distribution $%
\mathcal{F}$ in $Y$ of any codimension with the following residual formula
for the non-transversality points of $f(X)$ with respect to $\mathcal{F}$.

In order to state our main result, let us introduce some notions. 
Let $f:X\longrightarrow(Y,{\mathcal{F}})$ be a holomorphic map  and suppose  that $X$ and $Y$ are projective manifolds. 
We say that the set of points in $X$ where $f$ fails to be transversal to $\mathcal{F}$
is the \emph{ramification locus} of $f$ with respect to $\mathcal{F}$, and
denote it by $S(f,\mathcal{F})$.
The set $R(f,\mathcal{F}):=f(S(f,\mathcal{F}))$ is
called the \emph{\ branch locus} or the set of \emph{\ branch points} of $f$ with respect to $\mathcal{F}$.
Let $S(f,\mathcal{F})=\cup S_{j}$ be the decomposition of $S$ into irreducible components,
then we denote by $\mu(f,\mathcal{F},S_{j})$ the multiplicity of $S_{j}$ and
call it the \emph{ramification multiplicity of } $f$\emph{\ along }$S_{j}$ with
respect to $\mathcal{F}$. As usual, we denote by $[W]$ the  class in the Chow group of $X$  of the  subvariety  $W\subset X$. 
The class  $f_{\ast}[S_{j}]=:[R_{j}]$ is called a 
\emph{branch class} of $f$.
Observe that $R(f,\mathcal{F})$ is the set of tangency points between $f(X)$ and $\mathcal{F}$ if $%
\dim(X)\leq\dim(Y)$.

\begin{theorem}
\label{main} Let $f:X\longrightarrow(Y,{\mathcal{F}})$ be a holomorphic map 
of generic rank $r$ and $\mathcal{F}$ a non-singular distribution of
codimension $k$ on $Y$. Suppose the ramification locus of $f$ with respect
to $\mathcal{F}$ has codimension $n-k+1$, then 
\begin{equation*}
f_{\ast}(c_{n-k+1}(T_{X})\cap\lbrack X])+\sum_{i=1}^{r}(-1)^{i}f_{\ast
}(c_{n-k+1-i}(T_{X})\cap\lbrack X])\cap s_{i}(\mathcal{N}_{\mathcal{F}%
}^{\ast })=(-1)^{n-k+1}\sum_{R_{j}\subset R}\mu(f,\mathcal{F},S_{j})[R_{j}],
\end{equation*}
where $s_{i}(\mathcal{N}_{\mathcal{F}}^{\ast})$ is the $i$-th Segre class of 
$\mathcal{N}_{\mathcal{F}}^{\ast}$.
\end{theorem}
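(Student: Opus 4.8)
The plan is to realize the left-hand side as the localization, along the ramification locus, of a characteristic class built from the comparison between $TX$ and the pullback of the conormal data of $\mathcal{F}$, and then to identify the local contribution at a generic point of each component $S_j$ with the ramification multiplicity $\mu(f,\mathcal{F},S_j)$.

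First I would set up the relevant sheaf map. Since $\mathcal{F}$ is a regular distribution of codimension $k$ on $Y$, there is an exact sequence $0\to T_{\mathcal{F}}\to T_Y\to N_{\mathcal{F}}\to 0$, and dually the conormal sheaf $N_{\mathcal{F}}^{*}\hookrightarrow \Omega_Y^1$ is a subbundle of rank $k$. Pulling back by $f$ and composing with $df^{\vee}\colon f^{*}\Omega_Y^1\to \Omega_X^1$ gives a bundle map $\varphi\colon f^{*}N_{\mathcal{F}}^{*}\to \Omega_X^1$. The key observation is that $f$ is transverse to $\mathcal{F}$ at $p$ precisely when $\varphi$ is injective (as a bundle map, i.e. of maximal rank $k$) at $p$; hence $S(f,\mathcal{F})$ is exactly the degeneracy locus where $\varphi$ drops rank, and the hypothesis says this locus has the expected codimension $n-k+1 = \operatorname{rk}\Omega_X^1 - \operatorname{rk} f^{*}N_{\mathcal{F}}^{*} + 1$. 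The natural tool is then the Thom--Porteous / Giambelli--Thom--Porteous formula for the class of the first degeneracy locus: it is represented by $c_{n-k+1}$ of the ``virtual cokernel'' $\Omega_X^1 - f^{*}N_{\mathcal{F}}^{*}$, i.e. by $c_{n-k+1}\bigl(\Omega_X^1\bigr)\cdot c(f^{*}N_{\mathcal{F}}^{*})^{-1}$ expanded in the appropriate degree. Writing $c(f^{*}N_{\mathcal{F}}^{*})^{-1} = \sum_i s_i(f^{*}N_{\mathcal{F}}^{*})$ and using $c_j(\Omega_X^1)=(-1)^j c_j(T_X)$, the degree-$(n-k+1)$ part is $\sum_{i\ge 0}(-1)^{n-k+1-i} c_{n-k+1-i}(T_X)\cap s_i(f^{*}N_{\mathcal{F}}^{*})$, which after applying $f_{*}$ and the projection formula yields, up to the overall sign $(-1)^{n-k+1}$, exactly the left-hand side of the stated identity (with $s_i(N_{\mathcal{F}}^{*})$ pulled back, and truncated at $i=r$ since $s_i(f^{*}N_{\mathcal{F}}^{*})$ lives in the image of $f$ and vanishes against the relevant classes for $i>r$ by the generic-rank bound).

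Next I would analyze the structure of the degeneracy locus. By the codimension hypothesis, $S(f,\mathcal{F})$ is a local complete intersection of the expected dimension, so the Thom--Porteous formula applies without excess-intersection corrections and computes $[S(f,\mathcal{F})]$ as a cycle on $X$ with well-defined multiplicities along each irreducible component $S_j$: the multiplicity of the Thom--Porteous cycle along $S_j$ is the length of the local ring of the scheme-theoretic degeneracy locus at a generic point of $S_j$, which is by definition $\mu(f,\mathcal{F},S_j)$. In the foliated case $k=1$ treated by Izawa this length is read off from the local model $f=(f_1,\dots,f_m)$, $\widetilde{\mathcal{F}}=\ker df_m$, $g_i=\partial f_m/\partial x_i$, and the residue $\mathrm{Res}_p\!\left[\begin{smallmatrix} dg_1\wedge\cdots\wedge dg_m\\ g_1,\dots,g_m\end{smallmatrix}\right]$ is precisely $\dim_{\mathbb C}\mathcal{O}/(g_1,\dots,g_m)$; the same computation at a transversal slice to $S_j$ gives $\mu(f,\mathcal{F},S_j)$ in general. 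Pushing the equality of cycles $[S(f,\mathcal{F})]=\sum_j \mu(f,\mathcal{F},S_j)[S_j]$ forward by $f$ and recalling $f_{*}[S_j]=[R_j]$ gives the right-hand side.

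The main obstacle I anticipate is the scheme-theoretic bookkeeping that makes the multiplicity identification rigorous: one must check that the ideal sheaf cutting out the Porteous degeneracy locus of $\varphi$ agrees — at a generic point of each $S_j$, after restricting to a transversal section — with the ideal $(g_1,\dots,g_m)$ (resp.\ its higher-codimension analogue given by the $k\times k$ minors of the matrix of $\varphi$), so that the Porteous multiplicity coincides with $\mu(f,\mathcal{F},S_j)$ as \emph{defined} in the statement; this is where the regularity of $\mathcal{F}$ and the expected-codimension hypothesis are both used essentially. A secondary point to handle carefully is the truncation of the Segre sum at $i=r$: one has to argue that terms with $i>r$ contribute nothing after $f_{*}$, which follows because $s_i(f^{*}N_{\mathcal{F}}^{*})$ is pulled back from $Y$ and $f_{*}$ of (anything)$\cap f^{*}(\text{class of degree}>r\text{-dimensional support})$ vanishes for dimension reasons on the generic-rank-$r$ image. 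Everything else — the exact sequences, the projection formula, the sign computation $c_j(\Omega^1_X)=(-1)^jc_j(T_X)$ — is routine.
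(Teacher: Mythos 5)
Your proposal is correct and follows essentially the same route as the paper: you identify $S(f,\mathcal{F})$ with the degeneracy locus of the bundle map $f^{*}\mathcal{N}_{\mathcal{F}}^{*}\to\Omega_{X}^{1}$ (the paper phrases this via the pulled-back distribution $\widetilde{\mathcal{F}}=f^{*}\mathcal{F}$ and its locally free conormal sheaf), apply the Thom--Porteous formula under the expected-codimension hypothesis, expand $c_{n-k+1}(\Omega_{X}^{1}-f^{*}\mathcal{N}_{\mathcal{F}}^{*})$ in Chern and Segre classes, and conclude with the projection formula and the sign $c_{j}(\Omega_{X}^{1})=(-1)^{j}c_{j}(T_{X})$. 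Your remarks on the multiplicity identification and the truncation of the Segre sum are, if anything, more explicit than the paper's own treatment.
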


Some consequences of this result are the following.

\begin{corollary}[Izawa]
\label{Izawa}If $k=1$, then 
\begin{equation*}
\mathcal{\chi}(X)-\sum_{i=1}^{r}f_{\ast}(c_{n-i}(T_{X})\cap\lbrack X])\cap
c_{1}(\mathcal{N}_{\mathcal{F}})^{i}=(-1)^{n}\sum_{p\in S(f,\F)} \text{\rm Res}_p  \left[ 
\begin{array}{c}
dg_{1}\wedge\cdots\wedge dg_{m} \\ 
g_{1},\cdots,g_{m}%
\end{array}
\right].
\end{equation*}
\end{corollary}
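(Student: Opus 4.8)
The plan is to obtain the corollary as the specialization $k=1$ of Theorem~\ref{main}. When $k=1$ the normal sheaf $\mathcal{N}_{\mathcal{F}}$ is invertible, and the hypothesis that the ramification locus has codimension $n-k+1=n$ forces $S(f,\mathcal{F})$ to be zero-dimensional; since $X$ is projective this is a finite set of points, so $R=f(S(f,\mathcal{F}))$ is finite and both sides of the asserted identity are $0$-cycles, which I will compare through the degree homomorphism.

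First I would rewrite the left-hand side of Theorem~\ref{main} in this case. Its leading term $f_{\ast}(c_{n}(T_{X})\cap[X])$ is a $0$-cycle of degree $\chi(X)$ by the Gauss--Bonnet--Chern theorem, and since $f$ is proper, pushforward preserves degrees, so this term contributes $\chi(X)$. For the remaining terms, the Segre classes of the invertible sheaf $\mathcal{N}_{\mathcal{F}}^{\ast}$ reduce to powers of $c_{1}$, so, by the projection formula, each summand $(-1)^{i}f_{\ast}(c_{n-i}(T_{X})\cap[X])\cap s_{i}(\mathcal{N}_{\mathcal{F}}^{\ast})$ equals $\pm\,f_{\ast}(c_{n-i}(T_{X})\cap[X])\cap c_{1}(\mathcal{N}_{\mathcal{F}})^{i}$; collecting the signs coming from the factor $(-1)^{i}$ and from dualizing $\mathcal{N}_{\mathcal{F}}$, the sum over $1\le i\le r$ reproduces the left-hand side of the corollary.

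Next I would match the right-hand side $(-1)^{n}\sum_{R_{j}\subset R}\mu(f,\mathcal{F},S_{j})[R_{j}]$ with the residue sum. Here each irreducible component $S_{j}$ is a point $p$ and $[R_{j}]=[f(p)]$, so it is enough to identify the ramification multiplicity $\mu(f,\mathcal{F},\{p\})$ with the Grothendieck point residue occurring on the right of the corollary. For this I would use the local normal form recalled in the introduction: regularity of $\mathcal{F}$ lets one choose coordinates near $p$ and $f(p)$ with $f=(f_{1},\dots ,f_{m})$ and $f^{\ast}\mathcal{F}=\ker(df_{m})$, and, writing $g_{i}=\partial f_{m}/\partial x_{i}$, the morphism $T_{X}\to f^{\ast}\mathcal{N}_{\mathcal{F}}$ obtained by composing $df$ with the projection $f^{\ast}T_{Y}\to f^{\ast}\mathcal{N}_{\mathcal{F}}$ (whose degeneracy locus is exactly $S(f,\mathcal{F})$) is represented near $p$ by $(g_{1},\dots ,g_{n})$. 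Hence $S(f,\mathcal{F})$ is locally $\{g_{1}=\dots=g_{n}=0\}$, its multiplicity at $p$ equals $\dim_{\mathbb{C}}\mathcal{O}_{X,p}/(g_{1},\dots ,g_{n})$, and by the classical identity between the residue of a $0$-dimensional complete intersection and its colength this number is precisely the residue $\mathrm{Res}_{p}$ appearing in the statement. Substituting these two computations into Theorem~\ref{main} yields Izawa's formula.

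The only non-formal step is this last identification. The point to watch is the local presentation of the morphism $T_{X}\to f^{\ast}\mathcal{N}_{\mathcal{F}}$ at a ramification point: one must verify that regularity of $\mathcal{F}$ genuinely produces coordinates in which the partials $g_{i}$ are exactly the entries of the section cutting out the degeneracy scheme, so that the multiplicity of $S(f,\mathcal{F})$ at $p$ agrees with $\dim_{\mathbb{C}}\mathcal{O}_{X,p}/(g_{1},\dots ,g_{n})$ and hence with the residue. Everything else --- Gauss--Bonnet--Chern, the Segre classes of a line bundle, the projection formula, and properness of $f$ --- is routine bookkeeping.
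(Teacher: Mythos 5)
Your proposal is correct and follows essentially the same route as the paper: specialize Theorem \ref{main} to $k=1$, use the Gauss--Bonnet--Chern theorem to read the leading term as $\chi(X)$, and use $s_{i}(\mathcal{N}_{\mathcal{F}}^{\ast})=(-1)^{i}c_{1}(\mathcal{N}_{\mathcal{F}}^{\ast})^{i}$ for the line bundle $\mathcal{N}_{\mathcal{F}}^{\ast}$ to recover the corollary's left-hand side. Your extra step identifying the ramification multiplicity at an isolated point with the Grothendieck residue via the colength of $(g_{1},\dots,g_{n})$ is exactly the (standard) identification the paper leaves implicit, so it is a welcome but not divergent addition.
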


In fact, if $k=1$ we have $c_{n}(T_{X})\cap\lbrack X]=\mathcal{\chi}(X)$ by
the Chern-Gauss-Bonnet Theorem. Since $\mathcal{N}_{\mathcal{F}}^{\ast}$ is
a line bundle, then $s_{i}(\mathcal{N}_{\mathcal{F}}^{\ast})=(-1)^{i}c_{1}(%
\mathcal{N}_{\mathcal{F}}^{\ast})^{i}$ for all $i$. The above Izawa's
formula \cite[Theorem 4.1]{Izawa} implies the multiplicity formula 
\begin{equation*}
\chi(X)-\chi(F)\cdot\chi(C)=(-1)^{n}\sum_{p\in\sing(f)}\mu _{p}(f).
\end{equation*}

\begin{corollary}[Tangency formulae]
\label{Tangency} Let $X \subset Y$ be a $k$-dimensional submanifold
generically transverse to a non-singular distribution $\mathcal{F}$ on $Y$
of codimension $k$. Then 
\begin{equation*}
[c_{1}(N_{X|Y})-c_{1}(T_{\mathcal{F}})]\cap\lbrack X]=\sum_{R_{j}\subset
R}\mu(f,\mathcal{F},S_{j})[R_{j}].
\end{equation*}
In particular, if $\det(T_{\mathcal{F} })|_{X}-\det(N_{X|Y})$ is ample, then 
$X$ is tangent to ${\mathcal{F}}$.
\end{corollary}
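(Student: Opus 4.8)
The plan is to apply Theorem \ref{main} to the inclusion $f : X \hookrightarrow Y$, which is an embedding and hence of generic rank $r = k$, and then convert the output using the two relevant exact sequences. Because $\dim X = k$ coincides with the codimension of $\mathcal{F}$, the Chern degree appearing in Theorem \ref{main} is $n-k+1 = 1$, so the ramification locus is a divisor in $X$, in agreement with generic transversality. The left-hand side collapses dramatically: the classes $c_{1-i}(T_X)$ vanish for $i \geq 2$ (negative index), while $c_0(T_X) = 1$, so only the $i=1$ term survives. Using $s_1(\mathcal{N}_{\mathcal{F}}^{\ast}) = -c_1(\mathcal{N}_{\mathcal{F}}^{\ast}) = c_1(\mathcal{N}_{\mathcal{F}})$ together with the projection formula (every term is then $f_{\ast}$ of a class supported on $X$), the master formula reduces, as an identity of cycle classes on $X$, to $[c_1(T_X) - c_1(\mathcal{N}_{\mathcal{F}})]\cap [X] = -\sum_{R_j\subset R}\mu(f,\mathcal{F},S_j)[R_j]$.

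Next I would rewrite the tangent data using $0\to T_X\to TY|_X\to N_{X|Y}\to 0$ and the restriction to $X$ of $0\to T_{\mathcal{F}}\to TY\to \mathcal{N}_{\mathcal{F}}\to 0$. Comparing first Chern classes of $TY|_X$ gives $c_1(T_X)+c_1(N_{X|Y}) = c_1(T_{\mathcal{F}}|_X)+c_1(\mathcal{N}_{\mathcal{F}}|_X)$, whence $c_1(T_X)-c_1(\mathcal{N}_{\mathcal{F}}|_X) = c_1(T_{\mathcal{F}}|_X)-c_1(N_{X|Y})$. Substituting into the reduced formula and multiplying by $-1$ produces exactly $[c_1(N_{X|Y})-c_1(T_{\mathcal{F}})]\cap[X] = \sum_{R_j\subset R}\mu(f,\mathcal{F},S_j)[R_j]$, which is the first assertion.

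For the \emph{in particular} clause the key observation is that the right-hand side is an \emph{effective} cycle, since every ramification multiplicity $\mu(f,\mathcal{F},S_j)$ is a positive integer and each $R_j$ is a genuine subvariety. Set $L := \det(T_{\mathcal{F}})|_X \otimes (\det N_{X|Y})^{\vee}$, the line bundle with $c_1(L) = c_1(T_{\mathcal{F}})|_X - c_1(N_{X|Y}) = -[c_1(N_{X|Y})-c_1(T_{\mathcal{F}})]$, so that the formula just established reads $-c_1(L)\cap[X] = \sum \mu(f,\mathcal{F},S_j)[R_j]$ on $X$. I argue by contradiction: suppose $L$ is ample yet $X$ is generically transverse, so that this identity holds. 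Capping both sides with $c_1(L)^{k-1}$ and taking degrees, the right-hand side yields $\sum \mu(f,\mathcal{F},S_j)\,\bigl(c_1(L)^{k-1}\cdot[R_j]\bigr)\geq 0$, because an ample class paired with an effective cycle is non-negative, while the left-hand side equals $-c_1(L)^k\cap[X] = -L^k < 0$, since an ample line bundle on the $k$-dimensional projective variety $X$ satisfies $L^k>0$. This is absurd; the degenerate case $R=\varnothing$ is covered by the same computation, as it forces $c_1(L)\cap[X]=0$ and again contradicts $L^k>0$. Hence generic transversality cannot hold, i.e. $X$ is tangent to $\mathcal{F}$.

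The one place demanding care — and where a naive reading inverts the conclusion — is the direction of this positivity argument. The point is \emph{not} that ampleness empties the ramification locus; rather, the formula exhibits an ample (therefore strictly positive) intersection class as the negative of an effective one, which is impossible. Ampleness of $L$ is thus incompatible with generic transversality, and the only alternative left for $X$ is to be everywhere tangent to $\mathcal{F}$. The main subtlety to verify in a full write-up is accordingly the effectivity and positivity input, namely $\mu(f,\mathcal{F},S_j)>0$ together with $c_1(L)^{k-1}\cdot[R_j]>0$ for ample $L$.
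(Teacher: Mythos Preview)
Your proof is correct and follows essentially the same route as the paper's: apply Theorem~\ref{main} to the inclusion, use $n-k+1=1$ to collapse the sum, convert via $s_1(\mathcal{N}_{\mathcal{F}}^{\ast})=c_1(\mathcal{N}_{\mathcal{F}})$ and the two exact sequences for $TY|_X$, and for the \emph{in particular} clause argue by contradiction using effectivity of the ramification cycle against ampleness of $L$. The only difference is cosmetic: the paper tests positivity by intersecting with an irreducible curve $C\subset X$ (obtaining $-Z\cdot C>0$ from ampleness, contradicting $Z$ effective), whereas you cap with $c_1(L)^{k-1}$ and use $L^k>0$; your version is arguably cleaner since it sidesteps the need to choose $C$ outside the support of $Z$.
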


If $X=C$ is a curve on a surface $Y$, we have $[C]=c_{1}(\mathcal{O}%
(C))=c_{1}(N_{X|Y})$. This yields Brunella's formula 
\begin{equation*}
c_{1}(\mathcal{O}(C))^{2}-c_{1}(T_{\mathcal{F}})\cap c_{1}(\mathcal{O}%
(C))=\sum_{p\in \tang(\mathcal{F},C)}I_{p}(\mathcal{F},C).
\end{equation*}%
Moreover, this formula coincides with the Honda's formula \cite{Honda2} in
case $\mathcal{F}$ is a one-dimensional foliation and $X$ is a curve.

In Section 3, we prove Theorem \ref{main} and Corollary \ref{Tangency}.
\\
\\
\noindent\textbf{Acknowlegments.} The second named author was partially
supported by CAPES, CNPq and Fapesp-2015/20841-5 Research Fellowships.
 Finally, we would like to thank the referee by the suggestions, comments and
improvements to the exposition.

\section{Holomorphic distributions}

Let $X$ be a complex manifold of dimension $n$.

\begin{definition}\label{defifol}
A codimension $k$ distribution $\mathcal{F}$ on $X$ is given by an exact
sequence 
\begin{equation}  \label{defi_fol}
\mathcal{F}:\ 0\longrightarrow\mathcal{N}_{\mathcal{F}}^{\ast
}\longrightarrow\Omega_{X}^{1}\longrightarrow\Omega_{\mathcal{F}}\longrightarrow0,
\end{equation}
where $\mathcal{N}_{\mathcal{F}}^{\ast}$ is a coherent sheaf of rank $%
k\leq\dim(X)-1$ and $\Omega_{\mathcal{F}}$ is a torsion free sheaf. We say
that $\mathcal{F}$ is a foliation if at the level of local sections we have $%
d(\mathcal{N}_{\mathcal{F}}^{\ast})\subset\mathcal{N}_{\mathcal{F}%
}^{\ast}\wedge\Omega _{X}^{1}$. The singular set of the distribution $%
\mathcal{F}$ is defined by $\sing(\mathcal{F}):=\sing%
(\Omega_{\mathcal{F}})$. We say that $\mathcal{F}$ is regular if $\sing(\mathcal{F})=\emptyset$.
\end{definition}
Taking determinants of the map $\mathcal{N}_{\mathcal{F}}^{\ast}%
\longrightarrow\Omega_{X}^{1}$, we obtain a map: 
\begin{equation*}
\det(\mathcal{N}_{\mathcal{F}}^{\ast})\longrightarrow\Omega_{X}^{k},
\end{equation*}
which induces a twisted holomorphic $k$-form $\omega\in
H^{0}(X,\Omega_{X}^{k}\otimes\det(\mathcal{N}_{\mathcal{F}}^{\ast})^{\ast})$.
 Therefore, a distribution can be induced by a twisted holomorphic $k$-form 
$H^{0}(X,\Omega_{X}^{k}\otimes\det(\mathcal{N}_{\mathcal{F}}^{\ast})^{\ast})$
which is locally decomposable outside the singular set of ${\mathcal{F}}$.
That is, for each point $p\in X\setminus\sing({\mathcal{F}})$ there
exists a neighborhood $U$ and holomorphic $1$-forms $\omega_{1},\dots
,\omega_{k}\in H^{0}(U,\Omega_{U}^{1})$ such that 
\begin{equation*}
\omega|_{U}=\omega_{1}\wedge\cdots\wedge\omega_{k}.
\end{equation*}
Moreover, if ${\mathcal{F}}$ is a foliation then by Definition \ref{defifol}
 we have 
\begin{equation*}
d\omega_{i}\wedge\omega_{1}\wedge\cdots\wedge\omega_{k}=0
\end{equation*}
for all $i=1,\dots,k$.  
The tangent sheaf  of $\mathcal{F}$ is the  coherent sheaf of rank $(n-k)$  given by  
\begin{equation*}
T_{\mathcal{F}}=\{v\in T_{X};\ i_{v}\omega=0\}.
\end{equation*}
The normal sheaf of $\mathcal{F}$ is defined by $\mathcal{N}_{\mathcal{F} }=T_X/T_{\mathcal{F}}.$
It is worth noting that $\mathcal{N}_{\mathcal{F} }\neq  (\mathcal{N}_{\mathcal{F}}^{\ast} )^{\ast}$ whenever $\sing(\mathcal{F})\neq\emptyset$. Dualizing the sequence (\ref{defi_fol}) one obtains the exact sequence
\begin{equation*}
\ 0\longrightarrow T_{\mathcal{F}}\longrightarrow
T_{X}\longrightarrow (\mathcal{N}_{\mathcal{F}}^{\ast} )^{\ast}\longrightarrow \mathcal{E}xt^1(\Omega_{\mathcal{F}}, \mathcal{O}_X) \longrightarrow 0,
\end{equation*}
so that there is an exact sequence
\begin{equation*}
\ 0\longrightarrow  \mathcal{N}_{\mathcal{F}}   \longrightarrow (\mathcal{N}_{\mathcal{F}}^{\ast} )^{\ast}\longrightarrow \mathcal{E}xt^1(\Omega_{\mathcal{F}}, \mathcal{O}_X) \longrightarrow 0. 
\end{equation*}

\begin{definition}
Let $V\subset X$ an analytic subset. We say that $V$ is tangent to ${%
\mathcal{F}}$ if $T_{p}V\subset(T_{{\mathcal{F}}})_{p}$, for all $p\in
V\setminus \sing(V).$
\end{definition}

\section{Proof of the main results}

We begin by proving the main theorem.

\begin{proof}[Proof of Theorem \protect\ref{main}]
Consider a map $f:X\longrightarrow Y$ and let $(U,x)$ and $(V,y)$ be local
systems of coordinates for $X$ and $Y$ such that $f(U)\subset V$. Since $%
\mathcal{F}$ is a regular distribution, we may suppose that it is induced on 
$U$ by the $k$-form $\omega_{1}\wedge\cdots\wedge \omega_{k}$. Therefore,
the ramification locus of $f$ with respect to $\mathcal{F}$ on $U$ is given
by 
\begin{equation*}
S(f,\mathcal{F})|_{U}=\{f^{\ast}(\omega_{1}\wedge\cdots\wedge\omega_{k})=f^{\ast}(\omega
_{1})\wedge\cdots\wedge f^{\ast}(\omega_{k})=0\}.
\end{equation*}
In other words, the ramification locus $S(f,\mathcal{F})$ coincides with $\sing%
(f^{\ast}(\mathcal{F}))$. 

Let us denote $\widetilde {\mathcal{F}}:=f^{\ast}(%
\mathcal{F})$. Let $\{U_{\alpha}\}$ be a covering of $Y$ such that the
distribution ${\mathcal{F}}$ is induced on $U_{\alpha}$ by the holomorphic $%
1 $-forms $\omega_{1}^{\alpha},\dots,\omega_{k}^{\alpha}$. Hence, on $%
U_{\alpha}\cap U_{\beta}\neq\emptyset$ we have $(\omega_{1}^{\alpha}\wedge%
\cdots\wedge\omega_{k}^{\alpha})=g_{\alpha\beta}(\omega
_{1}^{\beta}\wedge\cdots\wedge\omega_{k}^{\beta}),$ where $\{g_{\alpha\beta
}\}$ is a cocycle generating the line bundle $\det(\mathcal{N}_{\mathcal{F}%
}^{\ast})^{\ast}.$ Then the distribution $\widetilde{\mathcal{F}}$ is
induced locally by $f^{\ast}(\omega_{1}^{\alpha}),\dots,f^{\ast}(%
\omega_{k}^{\alpha})$. This shows that $\mathcal{N}_{{\widetilde{\mathcal{F}}%
}}^{\ast}$ is locally free. Therefore the singular set of $\widetilde{\mathcal{F}%
}$ is the loci of degeneracy of the induced map 
\begin{equation*}
\mathcal{N}_{{\widetilde{\mathcal{F}}}}^{\ast}\longrightarrow\Omega_{X}^{1}
\end{equation*}
By hypothesis, the ramification locus of $f$ with respect to $\mathcal{F}$,
which is given by $\sing(\widetilde{\mathcal{F}})$, has codimension 
$n-k+1$, then it follows from Thom-Porteous formula \cite{Fulton} that 
\begin{equation*}
c_{n-k+1}(\Omega_{X}^{1}-\mathcal{N}_{{\widetilde{\mathcal{F}}}%
}^{\ast})\cap [X]=\sum_{j}\mu_{j}[S_{j}], 
\end{equation*}
where $\mu_{j}$ is the multiplicity of the irreducible component $S_{i}$. It
follows from $c(\Omega_{X}^{1}-\mathcal{N}_{{\widetilde{\mathcal{F}}}}^{\ast
})=c(\Omega_{X}^{1})\cdot s(\mathcal{N}_{{\widetilde{\mathcal{F}}}}^{\ast})$
that 
\begin{equation*}
c_{n-k+1}(\Omega_{X}^{1}-\mathcal{N}_{{\widetilde{\mathcal{F}}}%
}^{\ast})=\sum_{i=0}^{n-k+1}c_{n-k+1-i}(\Omega_{X}^{1})\cap s_{i}(\mathcal{N}%
_{{\widetilde{\mathcal{F}}}}^{\ast}),
\end{equation*}
where $s_{i}(\mathcal{N}_{{\widetilde{\mathcal{F}}}}^{\ast})$ is the $i$-th
Segre classe of $\mathcal{N}_{{\widetilde{\mathcal{F}}}}^{\ast}$. Since $%
X_{0}:=X-\sing(\widetilde{\mathcal{F}})$ is a dense and open subset
of $X$, then by taking the cap product we have 
\begin{align*}
c_{n-k+1}(\Omega_{X}^{1}-\mathcal{N}_{{\widetilde{\mathcal{F}}}%
}^{\ast})\cap\lbrack X] & =c_{n-k+1}(\Omega_{X}^{1}-\mathcal{N}_{{%
\widetilde {\mathcal{F}}}}^{\ast})\cap\lbrack X_{0}] \\
& =\sum_{i=0}^{n-k+1}(c_{n-k+1-i}(\Omega_{X}^{1}))\cap\lbrack X_{0}]\cap
s_{i}(f^{\ast}\mathcal{N}_{\mathcal{F}}^{\ast}).
\end{align*}
It follows from the projection formula that%
\begin{equation*}
f_{\ast}(c_{n-k+1}(\Omega_{X}^{1}-\mathcal{N}_{{\widetilde{\mathcal{F}}}%
}^{\ast}))\cap\lbrack X])=\sum_{i=0}^{n-k+1}f_{\ast}(c_{n-k+1-i}(\Omega
_{X}^{1})\cap\lbrack X])\cap s_{i}(\mathcal{N}_{\mathcal{F}}^{\ast})=\sum
_{j}\mu_{j}f_{\ast}[S_{j}].
\end{equation*}
\end{proof}

\bigskip

Now, we prove our tangency formulae as a consequence of the main theorem.

\begin{proof}[Proof of Corollary \protect\ref{Tangency}]
Let $i:X\hookrightarrow Y$ be the inclusion map. It follows from Theorem \ref%
{main} that 
\begin{equation*}
i_{\ast}(c_{1}(T_{X})\cap\lbrack X])-i_{\ast}([X])\cap s_{1}(\mathcal{N}_{%
\mathcal{F}}^{\ast})=-\sum_{R_{j}\subset R}\mu(f,\mathcal{F},S_{j})[R_{j}].
\end{equation*}
On the one hand, we have $c_{1}(T_{Y}|_{X})=c_{1}(N_{X|Y})+c_{1}(T_{X})$,
and on the other hand, we have $c_{1}(T_{Y}|_{X})=c_{1}(T_{\mathcal{F}%
}|_{X})+c_{1}(\mathcal{N}_{\mathcal{F}}|_{X})$. Since $s_{1}(\mathcal{N}_{%
\mathcal{F}}^{\ast})=-c_{1}(\mathcal{N}_{\mathcal{F}}^{\ast})=c_{1}(\mathcal{%
N}_{\mathcal{F}})$, we obtain 
\begin{equation*}
\lbrack c_{1}(N_{X|Y})-c_{1}(T_{\mathcal{F}})]\cap\lbrack X]=\sum
_{R_{j}\subset R}\mu(f,\mathcal{F},S_{j})[R_{j}].
\end{equation*}
Now notice that, by construction, the cycle 
\begin{equation*}
Z=\sum_{R_{j}\subset R}\mu(f,\mathcal{F},S_{j})[R_{j}]
\end{equation*}
is an effective divisor on $X$, since $\mu(f,\mathcal{F},S_{j})\geq0$. If
the line bundle $\det(T_{\mathcal{F}}))|_{X}-\det(N_{X|Y})=-[\det(N_{X|Y})-%
\det(T_{\mathcal{F}})|_{X}]$ is ample, we obtain 
\begin{equation*}
0<-[\det(N_{X|Y})-\det(T_{\mathcal{F}}))|_{X}]\cdot C=-Z\cdot C,
\end{equation*}
for all irreducible curve $C\subset X$. If $X$ is not invariant by ${%
\mathcal{F}}$ and $\det(T_{\mathcal{F}})|_{X}-\det(N_{X|Y})$ is ample, we
obtain an absurd. In fact, in this case $Z\cdot C<0$, contradicting the fact
that $Z$ is effective.\bigskip
\end{proof}

\section{Examples}

\subsection{Integrable example}

This example is inspired by an example due to Izawa \cite{Izawa}.

\noindent Consider $Y=\mathbb{P}^{3}\times\mathbb{P}^{1}\times\mathbb{P}^{1}$
and the subvariety $X=F^{-1}(0)\cap g^{-1}(0)$ given by the homogenous
equations%
\begin{equation*}
F(x,y,z)=\sum_{i=0}^{3}x_{i}^{\ell},\ \ G(x,y,z)=\sum_{i=0}^{1}x_{i}y_{i},
\end{equation*}
where $([x],[y],[z])=((x_{0}:x_{1}:x_{2}:x_{3}),(y_{0}:y_{1}),(z_{0}:z_{1}))%
\in Y$ are homogeneous coordinates. By a straightforward calculation one may
verify that $X$ is smooth. In $Y$ we consider the foliation $\mathcal{F}$
given by the fibers of the map $\pi:$ $\mathbb{P}^{3}\times\mathbb{P}%
^{1}\times\mathbb{P}^{1}\longrightarrow\mathbb{P}^{1}\times\mathbb{P}^{1}$
and let $f:X\rightarrow Y$ be the inclusion map. We will analyze the branch
points of the $f$ with respect to ${\mathcal{F}}$. \newline
\noindent A simple but exhaustive calculation shows that there is no branch
point in the hypersurface $x_{0}=0$, thus we concentrate in the Zariski
open set $x_{0}\neq0$.

\paragraph{The affine charts for $y_{0}\neq0$.}

In the affine charts for $x_{0}\neq0$ and $y_{0}\neq0$ the equations
defining $X$ assume the form%
\begin{align*}
1+x^{\ell}+y^{\ell}+z^{\ell} & =0, \\
1+ux & =0,
\end{align*}
where $(1:x:y:z)=(1:\frac{x_{1}}{x_{0}}:\frac{x_{2}}{x_{0}}:\frac{x_{3}}{%
x_{0}})$ and $(1:v)=(1:\frac{y_{1}}{x_{0}})$. This yields the
parametrization of $X$ given by%
\begin{align*}
x & =(-1)^{\frac{1}{\ell}}(y^{\ell}+z^{\ell}+1)^{\frac{1}{\ell}} \\
v & =(-1)^{\frac{1+\ell}{\ell}}(y^{\ell}+z^{\ell}+1)^{-\frac{1}{\ell}}.
\end{align*}
Now, recall that the leaves of $\mathcal{F}$ are given by $\{const\}\times\mathbb{C}$, hence the tangency points between $X$ and $\mathcal{F}$
are the solutions to the equation $du=u_{y}dy+u_{z}dz=0$. Thus the set of
tangency points coincides with the solutions of the following system of
equations 
\begin{align*}
0 & =\frac{\partial v}{\partial y}=(-1)^{-\frac{1}{\ell}}y^{\ell-1}(y^{\ell
}+z^{\ell}+1)^{-\frac{\ell+1}{\ell}} \\
0 & =\frac{\partial v}{\partial z}=(-1)^{-\frac{1}{\ell}}z^{\ell-1}(y^{\ell
}+z^{\ell}+1)^{-\frac{\ell+1}{\ell}}
\end{align*}
or in other words%
\begin{equation}
\left\{ 
\begin{array}{l}
x=(-1)^{\frac{1}{\ell}} \\ 
y^{\ell-1}=0 \\ 
z^{\ell-1}=0 \\ 
v=-(-1)^{-\frac{1}{\ell}}%
\end{array}
\right.  \label{sol-x0-y0}
\end{equation}
The solutions to this system of equations are given in terms of homogeneous
coordinates by%
\begin{equation*}
S_{k}^{0,0}=\{(1:\alpha_{k}:0:0)\}\times\{(1:-1/\alpha_{k})\}\times \mathbb{P%
}^{1},
\end{equation*}
where $\alpha_{k}=\exp(\frac{(2k+1)\pi i}{\ell})$, $k=0,\ldots,\ell-1$. Note
that $S_{k}^{0,0}$ is a solution with multiplicity $(\ell-1)^{2}$ and that
these solutions are contained in the codimension $2$ variety given by $%
x_{2}=x_{3}=0$.

\paragraph{The affine chart for $y_{1}\neq0$.}

On the other hand in the affine charts for $x_{0}\neq0$ and $y_{1}\neq0$ the
equations defining $X$ assume the form%
\begin{align*}
1+x^{\ell}+y^{\ell}+z^{\ell} & =0, \\
u+x & =0,
\end{align*}
where $(1:x:y:z)=(1:\frac{x_{1}}{x_{0}}:\frac{x_{2}}{x_{0}}:\frac{x_{3}}{%
x_{0}})$ and $(u:1)=(\frac{y_{0}}{y1}:1)$. This leads to the parametrization
of $X$ given by%
\begin{align*}
x & =(-1)^{\frac{1}{\ell}}(y^{\ell}+z^{\ell}+1)^{\frac{1}{\ell}} \\
u & =(-1)^{\frac{1+\ell}{\ell}}(y^{\ell}+z^{\ell}+1)^{\frac{1}{\ell}}.
\end{align*}
Since the leaves of $\mathcal{F}$ are given by $\{ const\}\times
\mathbb{C}$, then the tangency points between $X$ and $\mathcal{F}$ are the
solutions to the equation $du=u_{y}dy+u_{z}dz=0$. Therefore the set of
tangency points coincides with the solution to the system of equations%
\begin{align*}
0 & =\frac{\partial u}{\partial y}=(-1)^{\frac{\ell+1}{\ell}}y^{\ell
-1}(y^{\ell}+z^{\ell}+1)^{\frac{1-\ell}{\ell}} \\
0 & =\frac{\partial u}{\partial z}=(-1)^{\frac{\ell+1}{\ell}}z^{\ell
-1}(y^{\ell}+z^{\ell}+1)^{\frac{1-\ell}{\ell}}
\end{align*}
or in other words with the solutions to the system of equations%
\begin{equation}
\left\{ 
\begin{array}{l}
x=(-1)^{\frac{1}{\ell}} \\ 
y^{\ell-1}=0 \\ 
z^{\ell-1}=0 \\ 
u=-(-1)^{\frac{1}{\ell}}%
\end{array}
\right.  \label{sol-x0-y1}
\end{equation}
In homogeneous coordinates the solutions to this system of equations are
given by%
\begin{equation*}
S_{k}^{0,1}=\{(1:\alpha_{k}:0:0)\}\times\{(-\alpha_{k}:1)\}\times \mathbb{P}%
^{1},
\end{equation*}
where $\alpha_{k}=\exp(\frac{(2k+1)\pi i}{\ell})$, $k=0,\ldots,\ell-1$. Note
that $S_{k}^{0,1}$ is a solution with multiplicity $(\ell-1)^{2}$ and that
this solution is contained in the codimension $2$ variety $x_{2}=x_{3}=0$.
Notice also that $S_{k}^{0,1}=S_{k}^{0,0}$ for all $k=0,\ldots,\ell-1$.

\paragraph{The residual formula.}

Consider the projections $\pi_{1}:Y=\mathbb{P}^{3}\times\mathbb{P}^{1}\times%
\mathbb{P}^{1}\rightarrow\mathbb{P}^{3}$, $\pi_{2}:Y=\mathbb{P}^{3}\times%
\mathbb{P}^{1}\times\mathbb{P}^{1}\rightarrow\mathbb{P}^{1}$, $\pi_{3}:Y=%
\mathbb{P}^{3}\times\mathbb{P}^{1}\times\mathbb{P}^{1}\rightarrow\mathbb{P}%
^{1}$ and $\rho:Y=\mathbb{P}^{3}\times\mathbb{P}^{1}\times\mathbb{P}%
^{1}\rightarrow\mathbb{P}^{1}\times\mathbb{P}^{1}$. As usual, we denote a
line bundle on $Y$ by $\mathcal{O}(a,b,c):=\pi_{1}^{\ast }\mathcal{O}_{%
\mathbb{P}^{3}}(a)\otimes\pi_{2}^{\ast}\mathcal{O}_{\mathbb{P}%
^{1}}(b)\otimes\pi_{3}^{\ast}\mathcal{O}_{\mathbb{P}^{1}}(c)$, with $a,b,c\in%
\mathbb{Z}$. Now denote $h_{3}=c_{1}(\mathcal{O}(1,0,0))$, $h_{1,1}=c_{1}(%
\mathcal{O}(0,1,0))$, and $h_{1,2}=c_{1}(\mathcal{O}(0,0,1))$.

Summing up, the set of non-transversal points is given by the following
cycle 
\begin{equation*}
S=\sum_{k=0}^{\ell-1}(\ell-1)^{2}S_{k}^{0,0}.
\end{equation*}
Since $[S_{k}^{0,0}]=h_{3}^{3}\cdot h_{1,1}$, we concluded that 
\begin{align*}
\lbrack S] & =(\ell-1)^{2}\sum_{k=0}^{\ell-1}[S_{k}^{0,0}] \\
& =\ell(\ell-1)^{2}h_{3}^{3}\cdot h_{1,1}.
\end{align*}

Recall that $n=3$, $k=2$, and $r=2$, thus the left side of the formula
stated in Theorem \ref{main} assumes the form 
\begin{align*}
& f_{\ast}(c_{n-k+1}(T_{X})\cap\lbrack X])+\sum_{i=1}^{r}(-1)^{i}f_{\ast
}(c_{n-k+1-i}(T_{X})\cap\lbrack X])\cap s_{i}(\mathcal{N}_{\mathcal{F}%
}^{\ast })= \\
& =c_{2}(T_{X})\cap\lbrack X]-c_{1}(T_{X})\cap\lbrack X]\cap s_{1}(\mathcal{N%
}_{\mathcal{F}}^{\ast})+c_{0}(T_{X})\cap\lbrack X]\ \cap s_{2}(\mathcal{N}_{%
\mathcal{F}}^{\ast}) \\
& =\{\left. c_{2}(T_{X})-c_{1}(T_{X})\cap s_{1}(\mathcal{N}_{\mathcal{F}%
}^{\ast})+s_{2}(\mathcal{N}_{\mathcal{F}}^{\ast})\right\} \cap\lbrack X].
\end{align*}

Since the associated line bundles of $V(x_{0}^{\ell}+x_{1}^{\ell}+x_{2}^{%
\ell }+x_{3}^{\ell})$ and $V(x_{0}y_{0}+x_{1}y_{1})$ are $\mathcal{O}%
(\ell,0,0)$ and $\mathcal{O}(1,1,0)$, respectively, we have the short exact
sequence 
\begin{equation*}
0\longrightarrow T_{X}\longrightarrow T_{Y}|_{X}\longrightarrow\mathcal{O}%
(\ell,0,0)\oplus\mathcal{O}(1,1,0)|_{X}\longrightarrow0.
\end{equation*}
Now let $h_{3}=c_{1}(\mathcal{O}(1,0,0))$, $h_{1,1}=c_{1}(\mathcal{O}%
(0,1,0)) $, and $h_{1,2}=c_{1}(\mathcal{O}(0,0,1))$, then by the Euler
sequence for multiprojective spaces \cite{CS}, we conclude that 
\begin{equation*}
c(T_{Y})=(1+h_{3})^{4}(1+h_{1,1})^{2}(1+h_{1,2})^{2}.
\end{equation*}
with relations $(h_{3})^{4}=(h_{1,1})^{2}=(h_{1,2})^{2}=0$. Since $c(%
\mathcal{O}(\ell,0,0)\oplus\mathcal{O}(1,1,0))=(1+\ell
h_{3})(1+h_{3}+h_{1,1})$ and 
\begin{equation*}
c(T_{Y})|_{X}=c(T_{X})\cdot c(\mathcal{O}(\ell,0,0)\oplus\mathcal{O}%
(1,1,0)|_{X})
\end{equation*}
it follows that 
\begin{equation*}
c_{1}(T_{X})=(3-\ell)h_{3}+h_{1,1}+2h_{1,2},\ \ \ \ c_{2}(T_{X})=(4-\ell
)h_{3}h_{1,1}+(6-2\ell)h_{3}h_{1,2}+(3-3\ell+%
\ell^{2})h_{3}^{2}+2h_{1,1}h_{1,2}.
\end{equation*}
We calculate the Segre classes $s_{i}(\mathcal{N}_{{\widetilde{\mathcal{F}}}%
}^{\ast})$ for $i=1,\ldots,r$. Since in our example $r=2$, then it is enough
to calculate $s_{i}(\mathcal{N}_{{\widetilde{\mathcal{F}}}}^{\ast})$, $i=1,2$%
. The foliation $\mathcal{F}$ is the restriction of $\rho:Y=\mathbb{P}%
^{3}\times\mathbb{P}^{1}\times\mathbb{P}^{1}\rightarrow\mathbb{P}^{1}\times%
\mathbb{P}^{1}$ to $X$, then the normal bundle of $\mathcal{F}$ is 
\begin{equation*}
N_{\mathcal{F}}=\rho^{\ast}(T_{\mathbb{P}^{1}}\oplus T_{\mathbb{P}%
^{1}})|_{X}=(\mathcal{O}(0,2,0)\oplus\mathcal{O}(0,0,2))|_{X}.
\end{equation*}
Thus $N_{\mathcal{F}}^{\ast}=(\mathcal{O}(0,-2,0)\oplus\mathcal{O}%
(0,0,-2))|_{X}$. Since $(h_{1,1})^{2}=(h_{1,2})^{2}=0$ we get 
\begin{equation*}
s_{1}(N_{\mathcal{F}}^{\ast})=2(h_{1,1}+h_{1,2}),\ \ \ s_{2}(N_{\mathcal{F}%
}^{\ast})=4h_{1,1}h_{1,2}.
\end{equation*}
Observe that 
\begin{align*}
c_{1}(T_{X})\cap s_{1}(\mathcal{N}_{\mathcal{F}}^{\ast}) & =((3-\ell
)h_{3}+h_{1,1}+2h_{1,2})\cdot(2(h_{1,1}+h_{1,2})) \\
& =(6-2\ell)h_{3}h_{1,1}+(6-2\ell)h_{3}h_{1,2}+6h_{1,1}h_{1,2}.
\end{align*}
Thus%
\begin{align*}
& c_{2}(T_{X})-c_{1}(T_{X})\cap s_{1}(\mathcal{N}_{\mathcal{F}%
}^{\ast})+s_{2}(\mathcal{N}_{\mathcal{F}}^{\ast})= \\
&
=(4-\ell)h_{3}h_{1,1}+(6-2\ell)h_{3}h_{1,2}+(3-3\ell+%
\ell^{2})h_{3}^{2}+2h_{1,1}h_{1,2}- \\
&
-((6-2\ell)h_{3}h_{1,1}+(6-2%
\ell)h_{3}h_{1,2}+6h_{1,1}h_{1,2})+4h_{1,1}h_{1,2} \\
& =(\ell-2)h_{3}h_{1,1}+(3-3\ell+\ell^{2})h_{3}^{2}.
\end{align*}
Moreover, we have 
\begin{equation*}
\lbrack
X]=[V(x_{0}^{\ell}+x_{1}^{\ell}+x_{2}^{\ell}+x_{3}^{\ell})]\cap\lbrack
V(x_{0}y_{0}+x_{1}y_{1})]=\ell h_{3}(h_{3}+h_{1,1})=\ell h_{3}^{2}+\ell
h_{3}h_{1,1}.
\end{equation*}
Thus 
\begin{equation*}
\{\left. c_{2}(T_{X})-c_{1}(T_{X})\cap s_{1}(\mathcal{N}_{\mathcal{F}}^{\ast
})+s_{2}(\mathcal{N}_{\mathcal{F}}^{\ast})\right\} \cap\lbrack X]=[(\ell
-2)h_{3}h_{1,1}+(3-3\ell+\ell^{2})h_{3}^{2}]\cdot\lbrack\ell h_{3}^{2}+\ell
h_{3}h_{1,1}].
\end{equation*}
Finally, we obtain 
\begin{align*}
\{\left. c_{2}(T_{X})-c_{1}(T_{X})\cap s_{1}(\mathcal{N}_{\mathcal{F}}^{\ast
})+s_{2}(\mathcal{N}_{\mathcal{F}}^{\ast})\right\} \cap\lbrack X] &
=\ell(\ell-2+3-3\ell+\ell^{2})]h_{3}^{3}h_{1,1} \\
& =\ell(\ell-1)^{2}h_{3}^{3}h_{1,1}=[S].
\end{align*}

\subsection{Non-integrable example}

Let $X$ be a complex-projective manifold of dimension $\dim(X)=2n+1$. A
contact structure on $X$ is a regular distribution ${\mathcal{F}}$ induced
by a twisted $1$-form 
\begin{equation*}
\omega\in H^{0}(X,\Omega_{X}^{1}\otimes L),
\end{equation*}
such that $\omega\wedge(d\omega
)^{n}\neq0$ and  $L$ is a holomorphic line bundle.  Suppose that the second Betti number of $X$ is $b_{2}(X)=1$ and
that $X$ is not isomorphic to the projective space $\mathbb{P}^{2n+1}$. Then
it follows from \cite{Keb} that there exists a compact irreducible component 
$H\subset {\text{\rm RatCurves} }^{n}(X)$ of the space of rational curves on $X$
such that the intersection of $L$ with the curves associated with $H$ is $1.$
Moreover, if $C\subset X$ is a generic element of $H$, then $C$ is smooth,
tangent to ${\mathcal{F}}$, and 
\begin{equation*}
TX|_{C}=\mathcal{O}_{C}(2)\oplus\mathcal{O}_{C}(1)^{n-1}\oplus\mathcal{O}%
_{C}^{n+1},
\end{equation*}%
\begin{equation*}
T_{{\mathcal{F}}}|_{C}=\mathcal{O}_{C}(2)\oplus\mathcal{O}_{C}(1)^{n-1}\oplus%
\mathcal{O}_{C}^{n-1}\oplus\mathcal{O}_{C}(-1).
\end{equation*}
See \cite[Fact 2.2 and Fact 2.3]{KebII}. In particular, we obtain that $%
N_{C|X}=\mathcal{O}_{C}(1)^{n-1}\oplus\mathcal{O}_{C}^{n+1}$, since $T_{C}=%
\mathcal{O}_{C}(2)$. Then 
\begin{equation*}
\det(T_{{\mathcal{F}}})|_{C}-\det(N_{C|X})=\mathcal{O}_{C}(1)
\end{equation*}
is ample. Examples of such manifolds are given by homogeneous Fano contact
manifolds, cf. \cite{Bea}. This example satisfies the conditions of
Corollary \ref{Tangency}.

\end{document}